\newtheorem{theorem}{Theorem}[section]
\newtheorem{definition}[theorem]{Definition}
\newtheorem{proposition}[theorem]{Proposition}
\def\R{\mathbb R}
\def\K{\mathbb K}
\begin{document}

%%%%%% TO BE ENTERED BY THE AUTHOR(S)
%%%
%%% ENTER TITLE
\title{Corrigendum to ``On the Mazur-Ulam theorem in non-Archimedean fuzzy anti-2-normed spaces"}

%%% AUTHOR(S) FULL NAMES, AND EMAIL ADDRESSES
\author[affil1]{Javier~Cabello~Sánchez}
\ead{coco@unex.es}
%%%
\author[affil2]{José~Navarro~Garmendia}
\ead{navarrogarmendia@unex.es}
%%% ENTER AUTHOR(S) AFFILIATION(S)
\address[affil1]{Departamento de Matem\'{a}ticas, Universidad de Extremadura, Avenida de Elvas s/n, 06006; Badajoz. Spain}
\address[affil2]{Departamento de Matem\'{a}ticas, Universidad de Extremadura, Avenida de Elvas s/n, 06006; Badajoz. Spain}
%%% AND CORRESPONDINGLY FOR OTHER AUTHORS, IF THERE ARE MORE AUTHORS
%%% ENTER ABBREVIATED AUTHOR(S) NAMES FOR PAGE HEADINGS
\newcommand{\AuthorNames}{F. Author, S. Author}
%%% IF THERE ARE MORE THAN TWO AUTHORS WRITE
%%% \newcommand{\AuthorNames}{First Author et al.}
%%%

%%% ENTER MSC, KEYWORDS, RECEIVED, EDITOR, THANKS FOR FINANCIAL SUPPORT FOR RESEARCH
\newcommand{\FilMSC}{Primary 46S10; Secondary 26E30}
\newcommand{\FilKeywords}{fuzzy normed spaces; Mazur-Ulam Theorem; 
non-Archimedean normed spaces; strict convexity}
\newcommand{\FilCommunicated}{(name of the Editor, mandatory)}
\newcommand{\FilSupport}{Research supported in part by DGICYT project PID2019-103961GB-C21 (Spain), ERDF funds and Junta de Extremadura programs IB-16056 and IB-18087}
%%%
%%%%%%%%%%%%%%%%%%%%%%%%%%%%%%%%%%%%%%%%%%%%%%%%%%

\begin{abstract}
In this note we correct a paper by D. Kang (``On the Mazur-Ulam theorem 
in non-Archimedean fuzzy anti-2-normed spaces", Filomat, 2017).

The research in that paper applies to what the author calls strictly convex spaces. Nevertheless, we prove that this notion is void: there is no single space that satisfies the definition.
\end{abstract}

\maketitle

%%%%%% THIS PART MUST BE PLACED IMMEDIATELY AFTER THE \maketitle COMMAND
%%%%%% BACK TO ORIGINAL FOOTNOTES
\makeatletter
\renewcommand\@makefnmark%
{\mbox{\textsuperscript{\normalfont\@thefnmark)}}}
\makeatother
%%%%%%

\section{Introduction}

In \cite[Theorem 3.5]{NAFA2-Kang}, we can see the following non-Archimedean 
fuzzy version of the classical Mazur-Ulam theorem:

\begin{theorem}\label{NAF}
Let $X$, $Y$ be non-Archimedean fuzzy anti-2-normed spaces over a certain type of non-Archimedean field $\K$. If both $X$ and $Y$ are strictly convex, then any centred fuzzy 2-isometry $f: X \to Y$ is an additive map.
\end{theorem}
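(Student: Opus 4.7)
The plan is to adapt the classical Mazur--Ulam strategy to this fuzzy non-Archimedean 2-normed setting. The overall route is to first establish that $f$ preserves midpoints, i.e.\ that $f((x+y)/2) = (f(x)+f(y))/2$ for all $x, y \in X$, and then upgrade this to additivity using the centred hypothesis $f(0) = 0$.

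For the midpoint-preservation step, I would describe the arithmetic midpoint $m = (x+y)/2$ in purely fuzzy-2-metric language. Concretely, introduce the equidistant set
\[
M(x,y) = \left\{ m \in X : N_X(x-m, z, t) = N_X(y-m, z, t) \text{ for all } z \in X, t > 0 \right\}
\]
of elements of $X$ that are ``equidistant'' from $x$ and $y$ in the fuzzy anti-2-norm sense, and restrict attention to those $m \in M(x,y)$ that also satisfy the minimality clause forced by the non-Archimedean anti-2-norm axioms applied to $(x-m) + (m-y) = x-y$. Kang's strict-convexity hypothesis for $X$ is precisely what is invoked here to guarantee that the arithmetic midpoint is the unique element satisfying both conditions. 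Because $f$ is a fuzzy 2-isometry, it transports the defining relations from $X$ to $Y$: the image $f(m)$ satisfies in $Y$ the analogous characterisation relative to the pair $f(x), f(y)$, and applying strict convexity of $Y$ to the resulting equidistant set identifies $f(m)$ with $(f(x) + f(y))/2$.

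Having $f((x+y)/2) = (f(x)+f(y))/2$ available, together with $f(0) = 0$, I would deduce additivity by applying midpoint preservation to both pairs $(x, y)$ and $(x+y, 0)$:
\[
\tfrac{f(x)+f(y)}{2} = f\!\left(\tfrac{x+y}{2}\right) = \tfrac{f(x+y) + f(0)}{2} = \tfrac{f(x+y)}{2},
\]
which yields $f(x+y) = f(x) + f(y)$, as required.

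The main obstacle lies in the first step: one must unwind the precise form of Kang's strict convexity and verify that it genuinely collapses the equidistant set $M(x,y)$---suitably refined by the ``minimality'' clause---to a single point. This is delicate because non-Archimedean anti-2-norms are far more rigid than their classical Archimedean counterparts: the ultrametric-type inequality severely restricts which norm equalities are attainable, and one has to check that the candidate definition of strict convexity actually rules out non-trivial coincidences inside $M(x,y)$ without becoming internally inconsistent with the anti-2-norm axioms. Scrutinising exactly this point is what the present corrigendum is designed to do.
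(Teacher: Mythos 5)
There is a genuine gap, and it is exactly the one this corrigendum exists to expose: the hypothesis ``$X$ and $Y$ are strictly convex'' is never satisfiable, so the step of your argument where strict convexity is ``precisely what is invoked'' to collapse the equidistant set $M(x,y)$ to the arithmetic midpoint can never be carried out in any actual space. To see the voidness, take any $x\in X$ and apply Kang's condition~(\ref{def022}) with $y=-x$, $z=2x$: since each of the pairs $(x,-x)$, $(x,2x)$ and $(x,x)$ is linearly dependent, axiom (A2N-2) gives $N(x,-x,s)=N(x,2x,t)=N(x,x,\max\{s,t\})=0$ for \emph{all} $s,t>0$, so strict convexity would force $-x=2x$ and $s=t$ for arbitrary positive $s$ and $t$ --- and the conclusion $s=t$ fails even in the zero linear space. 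Consequently the class of strictly convex non-Archimedean fuzzy anti-2-normed spaces is empty, Theorem~\ref{NAF} is vacuously true, and the paper offers no Mazur--Ulam-style proof of it at all: the statement is merely quoted from Kang, and the paper's actual content is Proposition~\ref{NoHaySC}, which establishes the voidness by the computation above.

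Your closing paragraph almost reaches this point --- you correctly suspect that the strict-convexity condition may be ``internally inconsistent with the anti-2-norm axioms'' --- but the proposal still treats it as a workable hypothesis and builds the midpoint machinery on top of it, which is the wrong approach here. It is also worth noting that even setting the voidness aside, the classical midpoint strategy is structurally doomed in ultrametric settings: when the triangle inequality is replaced by a $\max$-type inequality as in (A2N-5), midpoints are never metrically unique (whole balls of points are equidistant from $x$ and $y$ with the same ``distance'' profile), so no reasonable convexity-type axiom can single out $(x+y)/2$ by fuzzy-2-metric data alone; this is the deeper reason, discussed in \cite{PRECISION}, why the various non-Archimedean versions of the Mazur--Ulam theorem collapse. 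The only honest proof of the statement as given is the observation that its hypothesis class is empty.
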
 %{\rm (\cite{..})}

The class of strictly convex fuzzy anti-2-normed spaces was introduced in that paper (cf. \cite[Definition~2.5]{NAFA2-Kang}), although there appeared no examples there. In this note, we prove:

\begin{proposition}
There are no such strictly convex spaces at all.
\end{proposition}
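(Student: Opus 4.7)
The plan is to prove the proposition by contradiction, exploiting the tension between the non-Archimedean (ultrametric-type) axiom for the fuzzy anti-2-norm $N'$ and the strict convexity condition of \cite[Definition~2.5]{NAFA2-Kang}. First I would unpack the non-Archimedean axiom: in every standard formulation it reads as a strong triangle-type inequality of the form $N'(u+v,w,\max\{s,t\}) \leq \max\{N'(u,w,s),N'(v,w,t)\}$ (or the reversed anti-variant), together with the classical ultrametric principle that equality is \emph{forced} whenever the two right-hand values are distinct.

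Next I would unpack Kang's strict convexity axiom: it essentially asserts that certain equality cases in the axioms of $N'$ can only occur when two of the arguments are linearly dependent. Assuming $(X,N')$ is a strictly convex non-Archimedean fuzzy anti-2-normed space (with $\dim_{\K} X \geq 2$, which is the only case in which a 2-norm structure is non-trivial), I would pick linearly independent vectors $x,y \in X$, an auxiliary $z \in X$, and a scalar $\lambda \in \K$ arranged so that $N'(x,z,t) \neq N'(\lambda y,z,t)$ at some convenient $t>0$; such a choice exists thanks to the monotonicity and non-degeneracy axioms built into any fuzzy norm. The ultrametric principle then \emph{forces} equality in the strong axiom for this pair, strict convexity immediately yields linear dependence of $x$ and $\lambda y$, hence $y \in \K x$, and this contradicts the linear independence of $x$ and $y$.

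The main obstacle, and the real content of the proof, will be the precise bookkeeping with \cite[Definition~2.5]{NAFA2-Kang}: one must verify that the equality produced by the ultrametric trick really falls inside the "equality case" that strict convexity is meant to rule out, and that the $t$-parameter and the auxiliary vector $z$ can be chosen simultaneously so as to activate that hypothesis. A secondary, easier check is that ``$N'(x,z,t) \neq N'(\lambda y,z,t)$'' is actually achievable for at least one choice of $\lambda, z, t$; this should be a consequence of the elementary axioms of a fuzzy anti-2-norm. Once those quantifiers are aligned the contradiction is immediate and the proposition follows.
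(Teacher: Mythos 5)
Your plan does not work as written, for two concrete reasons. First, you have misread \cite[Definition 2.5]{NAFA2-Kang}: its hypothesis is the \emph{three-fold equality} $N(x,y,s)=N(x,z,t)=N(x,y+z,\max\{s,t\})$, and its conclusion is ``$y=z$ and $s=t$'' --- not ``$x$ and $y$ are linearly dependent.'' Your argument arranges for $N(x,z,t)\neq N(\lambda y,z,t)$ precisely so that the ultrametric principle forces equality in (A2N-5); but with the two values on the right-hand side \emph{distinct}, the hypothesis of the strict convexity condition is simply not satisfied, so the condition yields nothing, and in any case its conclusion would not contradict linear independence. Equality in (A2N-5), i.e.\ $N(x,y+z,\max\{s,t\})=\max\{N(x,y,s),N(x,z,t)\}$, is strictly weaker than the three-fold equality required, so the ``bookkeeping'' you defer to is not a technicality --- it is a gap that cannot be closed along these lines. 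Second, the restriction to $\dim_{\K}X\geq 2$ is unnecessary and would leave the low-dimensional cases unproved.

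The correct argument goes in the opposite direction: instead of linearly independent vectors, use linearly \emph{dependent} ones. By (A2N-2), for any $x\in X$ and any $s,t>0$ one has $N(x,-x,s)=N(x,2x,t)=N(x,(-x)+2x,\max\{s,t\})=N(x,x,\max\{s,t\})=0$, since each pair is linearly dependent. Thus the hypothesis of the strict convexity condition is satisfied with $y=-x$, $z=2x$ and \emph{arbitrary} $s,t>0$, while the conclusion ``$y=z$ and $s=t$'' fails as soon as $s\neq t$. Hence no space whatsoever --- not even the zero space --- can satisfy the definition. Note that this uses only (A2N-2) and requires no auxiliary choices, no forcing of equality, and no dimension hypothesis.
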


%\medskip
%{\sc Proposition \ref{NoHaySC}} 
%{\em There are no such strictly convex spaces at all.}
%\medskip

As a consequence, the statement of the above Theorem is void. 

Let us also point out that the situation is similar in the different non-Archimedean versions of the Mazur-Ulam theorem that have appeared in recent years (cf. \cite{PRECISION} and references in \cite{NAFA2-Kang}).

\section{Voidness of the notion of strictly convex fuzzy space}

We reflect the following definition just for the sake of completeness.

\begin{definition}\label{defNAFA2} \rm 
A {\em non-Archimedean fuzzy anti-2-normed space} 
is a linear space $X$ over a non-Archimedean field $( \K, |\cdot |) $ together 
with a fuzzy anti-2-norm; that is to say, with a function $N:X^2\times\R\to[0,1]$ 
such that, for all $x,y\in X$ and all $s,t\in\R$,
\begin{enumerate}
\item[(A2N-1)] if $t\leq 0,$ then $N(x,y,t)=1$,
\item[(A2N-2)] if $t>0$, then $N(x,y,t)=0$ if and only if $x$ and $y$ are linearly dependent,
\item[(A2N-3)] $N(x,y,t)=N(y,x,t),$
\item[(A2N-4)] $N(x,cy,t)=N(x,y, t/|c|)$ for any non-zero $c \in \K$
\item[(A2N-5)] $N(x,y+z,\max\{s,t\})\leq \max\{N(x,y,s),N(x,z,t)\}$,
\item[(A2N-6)] $N(x,y,*)$ is a non-increasing function of $\R$ and $\lim_{t\to\infty}N(x,y,t)=0$. 
\end{enumerate}
\end{definition}

\begin{definition}\cite[Definition 2.5]{NAFA2-Kang} \rm A non-Archimedean fuzzy anti-2-normed space $(X,N)$ is {\em strictly convex} if
\begin{equation}\label{def022} 
N(x,y,s)=N(x,z,t) = N(x,y+z,\max\{s,t\}) \quad \Rightarrow \quad y=z \ \mbox{ and } \ s=t \ . 
\end{equation}
\end{definition}

\begin{proposition}\label{NoHaySC}
There are no strictly convex spaces at all --in the sense of the above Definition.
\end{proposition}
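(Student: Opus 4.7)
The plan is to prove non-existence by exhibiting, in an arbitrary non-Archimedean fuzzy anti-2-normed space $(X,N)$, explicit data $(x,y,z,s,t)$ that satisfies the hypothesis of the implication in (\ref{def022}) but fails its conclusion. The key observation I would exploit is axiom (A2N-1): it forces $N(x,y,t) = 1$ for every $t \leq 0$, uniformly in $x$ and $y$. So the entire half-line $\{t \leq 0\}$ is a constancy region for $N$, and the strict-convexity implication cannot possibly distinguish two different non-positive parameters.

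Concretely, I would fix any $x \in X$ and any $y \in X$ (already $x = y = 0$ suffices, since $X$ is a linear space), put $z := y$, $s := -1$, $t := -2$, and note that $\max\{s,t\} = -1 \leq 0$. Axiom (A2N-1) then immediately yields
\[
N(x,y,s) \;=\; N(x,z,t) \;=\; N(x,\, y+z,\, \max\{s,t\}) \;=\; 1,
\]
so the premise of (\ref{def022}) is satisfied; on the other hand $s = -1 \neq -2 = t$, so its conclusion ``$y = z$ and $s = t$'' is violated. Therefore $(X,N)$ cannot be strictly convex, and since $(X,N)$ was arbitrary, no strictly convex space can exist.

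The only real obstacle I anticipate is just spotting that (\ref{def022}) quantifies over \emph{all} real $s$ and $t$, including non-positive ones, where (A2N-1) renders $N$ identically $1$ and the implication collapses. Once that is noticed, the counterexample is universal and requires no information about the underlying linear space or the remaining axioms. A symmetric variant is equally cheap: keeping $s = t \leq 0$ but choosing $y \neq z$ breaks the other half of the conclusion with the same one-line argument.
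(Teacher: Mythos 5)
Your argument is correct, but it exploits a different degeneracy than the paper does. You use axiom (A2N-1): on the half-line $t\leq 0$ the function $N$ is identically $1$, so the triple of equalities in (\ref{def022}) holds trivially for $y=z$ and any two distinct non-positive $s,t$, while the conclusion $s=t$ fails. The paper instead works entirely with \emph{positive} parameters and uses axiom (A2N-2): taking $y=-x$ and $z=2x$, all three pairs $(x,-x)$, $(x,2x)$, $(x,x)$ are linearly dependent, so $N(x,-x,s)=N(x,2x,t)=N(x,x,\max\{s,t\})=0$ for all $s,t>0$, and again $s=t$ (and, for $x \ne 0$, also $y=z$) fails. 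Both refutations are one-liners and both show the definition is void, but they are not interchangeable in strength: your counterexample evaporates if one adopts the charitable reading that the implication in (\ref{def022}) is only meant to range over $s,t>0$ (the region where $N$ carries actual information), whereas the paper's counterexample survives that repair. In exchange, your route is even more economical --- it needs only (A2N-1), no vectors, and works verbatim in the zero space without invoking linear dependence. It would strengthen your write-up to note this sensitivity to the quantification over $s,t$, and perhaps to record the positive-parameter counterexample as well, since that is what shows the definition cannot be rescued by the obvious restriction.
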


\begin{proof}
Any non-Archimedean fuzzy anti-2-norm $N$ satisfies, by (A2N-2), that, 
for any $x\in X$, and any $ s, t\in(0,\infty)$, 
$$
N(x,-x,s)= N(x,2x,t)=N(x,x,\max\{s,t\})=0 .
$$

As these equalities are valid {\em for any} $s, t\in(0,\infty)$, it is clear that 
no fuzzy anti-2-normed space $(X,N)$ may fulfil condition~(\ref{def022}) --not even the zero linear space. 
\end{proof}

%%% ENTER REFERENCES IN THE FORM
%%% Standard abbreviations for titles of journals

\end{document}